\documentclass[12pt]{amsart}
\author{Paul \textsc{Poncet}}
\address{CMAP, \'{E}cole Polytechnique, Route de Saclay, 91128 Palaiseau Cedex, France \\
and INRIA, Saclay--\^{I}le-de-France}
\email{poncet@cmap.polytechnique.fr}

\usepackage{stmaryrd}
\usepackage{amssymb}
\usepackage{amsmath}
\usepackage{yhmath}
\usepackage{calrsfs} 
\usepackage{times} 
\usepackage{ifpdf}
\pagestyle{plain}

\newcommand{\naturels}{\mathbb N}
\newcommand{\reels}{\mathbb R}

\newcommand{\ddint}{\int^{\scriptscriptstyle\infty}\!\!}
\newcommand{\dint}[1]{\int^{\scriptscriptstyle\infty}_{#1}\!\!}

\newtheorem{theorem}{Theorem}[section]
\newtheorem{corollary}[theorem]{Corollary}
\theoremstyle{definition}
\newtheorem{definition}[theorem]{Definition}
\newtheorem{example}[theorem]{Example}
\newtheorem{hypothesis}[theorem]{Hypothesis}

\begin{document}

\title{Two-valued $\sigma$-maxitive measures \\and Mesiar's hypothesis}
\date{\today}

\subjclass[2010]{Primary 28B15; 
                 Secondary 03E72, 
                           49J52} 

\keywords{Radon--Nikodym theorem, Shilkret integration, non-additive measures, possibility measures, $\sigma$-principal measures, CCC measures}

\begin{abstract}
We reformulate Mesiar's hypothesis [Possibility measures, integration and fuzzy possibility measures, Fuzzy Sets and Systems 92 (1997) 191196], which as such was shown to be untrue by Murofushi [Two-valued possibility measures induced by $\sigma$-finite $\sigma$-additive measures, Fuzzy Sets and Systems 126 (2002) 265268]. We prove that a two-valued $\sigma$-maxitive measure can be induced by a $\sigma$-additive measure under the additional condition that it is $\sigma$-principal.
\end{abstract}

\maketitle

\section{Introduction}

In the sequel, $(E,\mathrsfs{B})$ denotes a measurable space.
Recall that a \textit{$\sigma$-maxitive measure} on $\mathrsfs{B}$ is a map $\tau : \mathrsfs{B} \rightarrow \overline{\reels}_+$ such that 
$\tau(\emptyset) = 0$ and 
$$
\tau\left( \bigcup_{j \in \naturels} B_j \right) = \sup_{j \in \naturels} \tau(B_j),
$$ 
for every sequence $(B_j)_{j\in \naturels}$ of elements of $\mathrsfs{B}$. 
A $\sigma$-maxitive measure $\tau$ on $\mathrsfs{B}$ is \textit{normed} if $\tau(E) = 1$; it is \textit{two-valued} if $\tau(\mathrsfs{B}) = \{ 0, 1 \}$, in which case it is normed. A family $\mathrsfs{C}$ of sets is said to satisfy \textit{CCC} (the \textit{countable chain condition}) if every disjoint subfamily of $\mathrsfs{C}$ is countable. A measure (either $\sigma$-maxitive or $\sigma$-additive) $\mu$ on $\mathrsfs{B}$ is then \textit{CCC} if $\{ B \in \mathrsfs{B} : \mu(B) > 0 \}$ satisfies CCC. 

The following hypothesis was proposed by Mesiar \cite{Mesiar97}.

\begin{hypothesis}
Let $\tau$ be a CCC (``countable chain condition'') two-valued $\sigma$-maxitive measure on $\mathrsfs{B}$. Then $\tau$ is induced by some $\sigma$-finite $\sigma$-additive measure $m$ on $\mathrsfs{B}$, i.e.\ $\tau = \delta_{m}$, where $\delta_m(B) = 1$ if $m(B) > 0$ and $\delta_m(B) = 0$ otherwise.
\end{hypothesis}

Murofushi \cite{Murofushi02} provided a counterexample and hence showed that this hypothesis as such is wrong. He focused on finding a necessary and sufficient condition for such a $\tau$ to be induced by some $\sigma$-finite $\sigma$-additive measure. In this article we propose to give up the constraint 
``$\sigma$-finite'' and to replace 
it by a more adequate condition, namely the \textit{$\sigma$-principality} property. 

\begin{definition}\label{loc}
A measure (either $\sigma$-maxitive or $\sigma$-additive) $\mu$ on $\mathrsfs{B}$ is \textit{$\sigma$-principal} if for each $\sigma$-ideal $\mathrsfs{I}$ of $\mathrsfs{B}$, there exists some $L \in \mathrsfs{I}$ such that $\mu(S \backslash L) = 0$ for all $S \in \mathrsfs{I}$. 
\end{definition}

\begin{example}
Let $E$ be a set endowed with its power set ($\mathrsfs{B} = 2^E$). The $\sigma$-additive measure $\# : B \mapsto \# B$, where $\# B$ is the cardinality of $B$, is $\sigma$-principal if and only if $E$ is countable.
\end{example}

A $\sigma$-principal measure is always CCC. See 
Sugeno and Murofushi \cite{Sugeno87} for a proof of the converse statement using Zorn's Lemma. 
Note also that every finite (or $\sigma$-finite) $\sigma$-additive measure is $\sigma$-principal.

\section{Modified Mesiar's Hypothesis}

Mesiar \cite{Mesiar97} noted that, if his hypothesis were true, then every CCC $\sigma$-maxitive measure could be represented as an essential supremum with respect to a $\sigma$-additive measure. We show first that such a representation holds, then prove a modified version of Mesiar's hypothesis.

\begin{theorem}\label{corresssup}
Any $\sigma$-principal (resp.\ CCC) $\sigma$-maxitive measure can be expressed as an essential supremum with respect to a $\sigma$-principal (resp.\ CCC) $\sigma$-additive measure.
\end{theorem}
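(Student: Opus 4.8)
The plan is to extract from the given $\sigma$-maxitive measure $\tau$ two objects: a measurable \emph{density} $c : E \to \overline{\reels}_+$ recording the values of $\tau$, and a $\sigma$-additive measure $m$ sharing the null sets of $\tau$, so that
$$
\tau(B) = \operatorname{ess\,sup}_{x \in B} c(x) \qquad (B \in \mathrsfs{B}),
$$
the essential supremum being taken against $m$. The observation that glues the two pieces together is that
$$
\operatorname{ess\,sup}_{x\in B} c = \inf\{ t \ge 0 : m(\{x \in B : c(x) > t\}) = 0 \}
$$
depends on $m$ only through its null ideal. It therefore suffices to build $m$ with $\{m = 0\} = \{\tau = 0\}$, after which the identity above holds whether the essential supremum is computed against $m$ or against the null sets of $\tau$ itself. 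Since $\sigma$-principality and CCC are properties of the null ideal alone (Definition \ref{loc} refers to $\mu$ only through the condition $\mu(S\setminus L)=0$, and CCC only through $\{\mu>0\}$), such an $m$ is $\sigma$-principal (resp.\ CCC) exactly when $\tau$ is; this settles the ``resp.'' clause at no extra cost. In the CCC case I would first invoke the equivalence of CCC and $\sigma$-principality recalled above to run the density construction.

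To build the density I would first note that $\tau$ is monotone, since $\tau(B) = \max\{\tau(A), \tau(B\setminus A)\} \ge \tau(A)$ for $A \subseteq B$. Hence, for each $t \ge 0$, the family $\mathrsfs{I}_t = \{B \in \mathrsfs{B} : \tau(B) \le t\}$ is a $\sigma$-ideal of $\mathrsfs{B}$ (closed under passing to smaller $\mathrsfs{B}$-sets by monotonicity, and under countable unions by $\sigma$-maxitivity). Applying $\sigma$-principality to each $\mathrsfs{I}_q$ with $q \in \mathbb{Q}_+$ yields $L_q \in \mathrsfs{I}_q$ such that $\tau(S \setminus L_q) = 0$ for all $S \in \mathrsfs{I}_q$; replacing $L_q$ by $\bigcup_{p \le q} L_p$, still in $\mathrsfs{I}_q$ because $\tau(\bigcup_{p\le q} L_p) = \sup_{p\le q}\tau(L_p) \le q$, I may assume $(L_q)$ nondecreasing. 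I then set $c(x) = \inf\{q \in \mathbb{Q}_+ : x \in L_q\}$ with $\inf\emptyset = +\infty$; since $\{c \le t\} = \bigcap_{q > t} L_q \in \mathrsfs{B}$, the function $c$ is measurable.

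The crux is checking $\tau(B) = \operatorname{ess\,sup}_{x\in B} c(x)$ (against the null sets of $\tau$) for every $B$. For ``$\ge$'', if $\{x \in B : c(x) > t\}$ is $\tau$-null then $\tau(B) = \tau(B \cap \{c \le t\})$, and $B \cap \{c \le t\} \subseteq L_q$ for every rational $q > t$, so $\tau(B) \le \tau(L_q) \le q$ and hence $\tau(B) \le t$. For ``$\le$'', fix rational $q \ge \tau(B)$, so $B \in \mathrsfs{I}_q$; as $c(x) > q$ forces $x \notin L_q$, one has $B \cap \{c > q\} \subseteq B \setminus L_q$, and the essential-top property gives $\tau(B \cap \{c > q\}) \le \tau(B \setminus L_q) = 0$, whence $\operatorname{ess\,sup}_{x\in B} c \le q$; letting $q \downarrow \tau(B)$ finishes the finite case, and the value $\tau(B)=+\infty$ follows from the first inequality applied to every finite $t$.

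Finally, writing $\mathrsfs{N} = \{B : \tau(B) = 0\}$ for the null $\sigma$-ideal, I would take $m(B) = 0$ if $B \in \mathrsfs{N}$ and $m(B) = +\infty$ otherwise. A one-line verification shows $m$ is $\sigma$-additive (a countable disjoint union lies outside $\mathrsfs{N}$ iff one of its terms does, because $\mathrsfs{N}$ is a $\sigma$-ideal), and $\{m = 0\} = \mathrsfs{N} = \{\tau = 0\}$ by construction. By the opening remark $m$ is then $\sigma$-principal (resp.\ CCC) together with $\tau$, and the essential supremum against $m$ reproduces the one against $\tau$, namely $\tau(B)$, completing the representation. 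I expect the delicate step to be the density part — securing the nested essential-top sets $L_q$ and the lower inequality — rather than the measure: once one allows the non-$\sigma$-finite, $\{0,+\infty\}$-valued $m$, realizing an arbitrary null $\sigma$-ideal becomes immediate, and it is precisely this freedom that replaces Mesiar's $\sigma$-finiteness and evades Murofushi's counterexample.
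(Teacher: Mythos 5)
Your proof is correct, but it takes a genuinely different route from the paper's. The paper constructs the representing measure first: it takes $m=\overline{\tau}$ to be the \emph{disjoint variation} of $\tau$, i.e.\ $m(B)=\sup_{\pi}\sum_{B'\in\pi}\tau(B\cap B')$ over finite partitions, which by a cited result of Pap is the least $\sigma$-additive measure dominating $\tau$; it checks that $m$ inherits $\sigma$-principality (resp.\ CCC) from $\tau$; and it then obtains the density $c$ in one stroke by applying the Sugeno--Murofushi Radon--Nikodym theorem for the Shilkret integral to $\tau$ and $\nu=\delta_m$. You go the other way around: you build the density by hand from the level $\sigma$-ideals $\mathrsfs{I}_q=\{B : \tau(B)\le q\}$ and the essential-top sets $L_q$ supplied by $\sigma$-principality --- in effect reproving from scratch the special case of the Sugeno--Murofushi theorem that the paper quotes as a black box --- and you then realize the required null ideal by the trivial $\{0,+\infty\}$-valued $\sigma$-additive measure. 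Your verification of $\tau(B)=\inf\{t : \tau(B\cap\{c>t\})=0\}$ is sound in both directions, including the $\tau(B)=+\infty$ case, and your observation that the essential supremum, $\sigma$-principality and CCC all depend on $m$ only through its null ideal is exactly what makes the degenerate $m$ admissible. What your approach buys is self-containedness (no appeal to the disjoint-variation theorem or to the appendix) and a clean isolation of the role of the null ideal; what it loses is that your $m$ is never $\sigma$-finite unless $\tau\equiv 0$, whereas the disjoint variation is a canonical majorant of $\tau$. Both constructions make the subsequent corollary's equivalence $\tau(B)>0\Leftrightarrow m(B)>0$ immediate, and both handle the CCC case the same way, by passing through the Zorn's-lemma equivalence of CCC and $\sigma$-principality recalled in the paper.
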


\begin{proof}
Let $\tau$ be a $\sigma$-principal $\sigma$-maxitive measure on $\mathrsfs{B}$ and $m = \overline{\tau}$ be the map defined on $\mathrsfs{B}$ by
$$
m(B) = \sup_{\pi} \sum_{B' \in \pi} \tau(B\cap B'),
$$
where the supremum is taken over the set of finite $\mathrsfs{B}$-partitions $\pi$ of $E$. It is not difficult to show that $m$, called the \textit{disjoint variation} of $\tau$, is the least $\sigma$-additive measure greater than $\tau$ (see e.g.\ \cite[Theorem~3.2]{Pap95}). Let us show that $m$ is $\sigma$-principal. If $\mathrsfs{I}$ is a $\sigma$-ideal of $\mathrsfs{B}$, there exists some $L \in \mathrsfs{I}$ such that $\tau(B \backslash L) = 0$ for all $B \in \mathrsfs{I}$ (because $\tau$ is $\sigma$-principal). If $B \in \mathrsfs{I}$, then $\tau(B\cap B' \backslash L) = 0$ for all $B' \in \mathrsfs{B}$, since $B\cap B' \in \mathrsfs{I}$. Hence we have $m(B \backslash L) = 0$. Moreover, $m(B) > 0$ implies $\tau(B) > 0$, so that $m$ is CCC if  $\tau$ is CCC.
With the Sugeno--Murofushi theorem (see a reminder in the appendix, Theorem~\ref{sugeno-murofushi}) and the fact that $\tau$ is absolutely continuous with respect to $\delta_m$, one can write $\tau(B) = \ddint c\, d\delta_m = m$-$\sup_{x\in B} c(x)$, where $c : E \rightarrow \overline{\reels}_+$ is a $\mathrsfs{B}$-measurable map.  
\end{proof}

\begin{corollary}
A two-valued $\sigma$-maxitive measure is $\sigma$-principal (resp.\ CCC) if and only if it is induced by a $\sigma$-principal (resp.\ CCC) $\sigma$-additive measure.
\end{corollary}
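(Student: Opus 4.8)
The plan is to prove both directions by leveraging the disjoint variation $m = \overline{\tau}$ introduced in the proof of Theorem~\ref{corresssup} together with the observation that, because a two-valued $\tau$ takes only the values $0$ and $1$, the essential-supremum representation of Theorem~\ref{corresssup} collapses to the cleaner identity $\tau = \delta_m$. Throughout I would treat the $\sigma$-principal and CCC cases in parallel, since both hypotheses depend only on the collection $\{B \in \mathrsfs{B} : \mu(B) > 0\}$ of non-null sets, and the arguments are formally identical.

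For the ``if'' direction, suppose $\tau = \delta_m$ with $m$ a $\sigma$-principal (resp.\ CCC) $\sigma$-additive measure. First I would check that $\delta_m$ is genuinely a two-valued $\sigma$-maxitive measure: by countable additivity (hence countable subadditivity) of $m$, one has $m\left(\bigcup_j B_j\right) > 0$ if and only if $m(B_j) > 0$ for some $j$, which is exactly $\sigma$-maxitivity of $\delta_m$. The crucial remark is then that $\{B : \delta_m(B) > 0\} = \{B : m(B) > 0\}$ \emph{verbatim}, so the CCC property transfers at once. For $\sigma$-principality, given a $\sigma$-ideal $\mathrsfs{I}$ I would take the set $L \in \mathrsfs{I}$ furnished by the $\sigma$-principality of $m$, satisfying $m(S \backslash L) = 0$ for all $S \in \mathrsfs{I}$; since $m(S\backslash L) = 0$ forces $\delta_m(S\backslash L) = 0$, the same $L$ witnesses the $\sigma$-principality of $\tau = \delta_m$.

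For the ``only if'' direction, suppose $\tau$ is two-valued and $\sigma$-principal (resp.\ CCC), and let $m = \overline{\tau}$ be its disjoint variation. Theorem~\ref{corresssup} (more precisely its proof) already provides that $m$ is a $\sigma$-principal (resp.\ CCC) $\sigma$-additive measure, so it remains only to identify $\tau$ with $\delta_m$. Here I would use the two-sided equivalence $m(B) > 0 \iff \tau(B) > 0$: the implication $\tau(B) > 0 \Rightarrow m(B) > 0$ follows from $m \geq \tau$, and the reverse implication is exactly the remark ``$m(B) > 0$ implies $\tau(B) > 0$'' recorded in the proof of Theorem~\ref{corresssup} (unfold the definition of $m$ to produce a set $B \cap B' \subseteq B$ with $\tau(B \cap B') > 0$, then invoke monotonicity of $\tau$). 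Since $\tau$ is two-valued, $\tau(B) > 0$ is the same as $\tau(B) = 1$, whence $\delta_m(B) = 1 \iff m(B) > 0 \iff \tau(B) = 1$, giving $\tau = \delta_m$.

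I do not expect a serious obstacle; the content of the corollary is essentially a repackaging of Theorem~\ref{corresssup} under the two-valued hypothesis. The one point requiring care is the equivalence $m(B) > 0 \iff \tau(B) > 0$, i.e.\ that the disjoint variation and $\tau$ share the same null sets — but this is precisely what makes the two-valued case special, and it is already isolated in the earlier proof, so the remaining work is purely verification.
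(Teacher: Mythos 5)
Your proof is correct and follows essentially the same route as the paper: the ``only if'' direction is exactly the paper's argument, namely that the disjoint variation $m=\overline{\tau}$ inherits $\sigma$-principality (resp.\ CCC) from $\tau$ and satisfies $\tau(B)>0 \Leftrightarrow m(B)>0$, which forces $\tau=\delta_m$ when $\tau$ is two-valued. You are in fact slightly more complete than the paper, which leaves the easy ``if'' direction (that $\delta_m$ is $\sigma$-maxitive and shares the non-null sets of $m$) implicit.
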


\begin{proof}
Let $\tau$ be a $\sigma$-principal $\sigma$-maxitive measure on $\mathrsfs{B}$. 
The above construction of $m$ shows that $\tau(B) > 0 \Leftrightarrow m(B) > 0$, which implies that $\tau = \delta_m$ if $\tau$ is two-valued.
\end{proof}


\appendix

\section{}

Sugeno and Murofushi \cite{Sugeno87} proved a Radon--Nikodym like theorem for the Shilkret integral in the case where the dominating $\sigma$-maxitive measure is CCC. Actually their proof remains valid if one just assumes $\sigma$-principality (this is straightforward since they showed under Zorn's Lemma that every CCC measure is $\sigma$-principal). 

\begin{theorem}[Sugeno--Murofushi]\label{sugeno-murofushi}
Let $\tau$, $\nu$ be $\sigma$-maxitive measures on $\mathrsfs{B}$. Assume that $\nu$ is $\sigma$-finite and $\sigma$-principal. The following are equivalent:
\begin{enumerate}
	\item\label{sm1} $\tau$ is absolutely continuous with respect to $\nu$, i.e.\ $\nu(B) = 0$ implies $\tau(B) = 0$, for all $B \in \mathrsfs{B}$, 
	\item\label{sm2} there exists some $\mathrsfs{B}$-measurable map $c : E \rightarrow \overline{\reels}_+$ such that, for all $B\in \mathrsfs{B}$,
\begin{align*}
\tau(B) = \dint{B} c\, d\nu. 
\end{align*}
\end{enumerate}
If (\ref{sm1}) or (\ref{sm2}) holds, then $c$ is unique $\nu$-almost everywhere.
\end{theorem}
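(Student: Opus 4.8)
The implication (\ref{sm2}) $\Rightarrow$ (\ref{sm1}) is the easy direction: if $\nu(B)=0$ then monotonicity of $\nu$ gives $\nu(B\cap\{c\geq t\})=0$ for every $t>0$, whence $\dint{B} c\,d\nu = \sup_{t>0} t\,\nu(B\cap\{c\geq t\}) = 0 = \tau(B)$. The content of the theorem is the converse, which amounts to constructing the density $c$, and this is where $\sigma$-principality does the work.

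For the construction I would, for each rational $t>0$, introduce the family $\mathrsfs{J}_t = \{B\in\mathrsfs{B} : \tau(B')\leq t\,\nu(B')\text{ for every }B'\in\mathrsfs{B}\text{ with }B'\subseteq B\}$. The first key step is to check that $\mathrsfs{J}_t$ is a $\sigma$-ideal: it is plainly downward closed, and if $B_n\in\mathrsfs{J}_t$ and $B'\subseteq\bigcup_n B_n$, then maxitivity of \emph{both} $\tau$ and $\nu$ yields $\tau(B') = \sup_n\tau(B'\cap B_n)\leq\sup_n t\,\nu(B'\cap B_n) = t\,\nu(B')$. This is the crucial point where maxitivity (rather than additivity) makes the level families behave well. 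Applying $\sigma$-principality of $\nu$ to $\mathrsfs{J}_t$ produces $L_t\in\mathrsfs{J}_t$ with $\nu(S\setminus L_t)=0$ for all $S\in\mathrsfs{J}_t$; replacing $L_t$ by $\bigcup_{s\leq t}L_s$ over the countably many rationals $s\leq t$, I may assume $t\mapsto L_t$ is nondecreasing. I then set $c(x) = \inf\{t>0 : x\in L_t\}$ (with $\inf\emptyset=+\infty$), which is $\mathrsfs{B}$-measurable because $\{c<t\} = \bigcup_{s<t}L_s$.

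It remains to verify $\tau(B) = \dint{B} c\,d\nu$, which splits into two inequalities. For $\tau(B)\leq\dint{B} c\,d\nu$ I would run a layer-cake estimate: since $c\leq t$ on $L_t$, a fine rational partition of the range of $c$ together with maxitivity of $\tau$ bounds $\tau(B)$ by $\sup_k t_{k+1}\,\nu(B\cap\{c\geq t_k\})$, which tends to $\dint{B} c\,d\nu$ as the mesh shrinks. The reverse inequality reduces, after fixing $t$ and putting $A = B\cap\{c\geq t\}$, to showing $\tau(A)\geq s\,\nu(A)$ for every rational $s<t$; here $A$ meets $L_s$ only in a $\nu$-null set, so $\nu$-maximality of $L_s$ forces every $B'\subseteq A$ with $\nu(B')>0$ to fall outside $\mathrsfs{J}_s$, hence to contain a piece on which $\tau>s\,\nu$. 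An exhaustion argument then finishes it: take a maximal disjoint family of such pieces, which is countable since $\nu$ is CCC (being $\sigma$-principal), observe that the remainder is $\nu$-null and therefore $\tau$-null by (\ref{sm1}), and conclude $\tau(A)\geq s\,\nu(A)$; letting $s\uparrow t$ gives the claim. I expect this exhaustion/Hahn-type step, together with the bookkeeping of the countably many $\nu$-null exceptional sets produced by the levels, to be the main obstacle.

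Uniqueness follows from a level-set comparison. If $c$ and $c'$ both represent $\tau$ but $\nu(\{c>c'\})>0$, I pick rationals $p<q$ with $D = \{c'<p<q<c\}$ of positive measure; evaluating the integral over $D$ gives $q\,\nu(D)\leq\tau(D)\leq p\,\nu(D)$, and $\sigma$-finiteness of $\nu$ lets me choose $D$ with $0<\nu(D)<\infty$ to cancel and reach the contradiction $q\leq p$. Thus $c=c'$ holds $\nu$-almost everywhere. Note that $\sigma$-finiteness enters precisely here, and, via the reduction to sets of finite measure, to keep $c$ finite $\nu$-almost everywhere.
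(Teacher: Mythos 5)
The paper does not actually prove this statement: Theorem~\ref{sugeno-murofushi} is quoted in the appendix as a known result of Sugeno and Murofushi \cite{Sugeno87}, with only the remark that their CCC-based proof goes through verbatim under $\sigma$-principality. So there is no in-paper argument to compare against; what you have written is a reconstruction, and it is essentially the right one --- it follows the standard Radon--Nikodym pattern for maxitive measures via the hereditary level ideals $\mathrsfs{J}_t$, which is in the spirit of the original. Your key steps check out: $\mathrsfs{J}_t$ is indeed a $\sigma$-ideal by $\sigma$-maxitivity; $L_s\subseteq\{c\le s\}$, so for $s<t$ the set $A=B\cap\{c\ge t\}$ is actually \emph{disjoint} from $L_s$ (not merely almost so), and $\nu$-maximality of $L_s$ then forces every positive-measure subset of $A$ out of $\mathrsfs{J}_s$; the exhaustion closes because the remainder is $\nu$-null, hence $\tau$-null by (\ref{sm1}), and because $\nu(\bigcup_i C_i)=\sup_i\nu(C_i)$ by maxitivity (no additivity needed). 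A few points deserve to be made explicit rather than left as ``bookkeeping''. First, your geometric mesh handles $\{0<c<\infty\}$, but the two endpoint sets need separate treatment: $\tau(B\cap\{c=\infty\})\le\dint{B}c\,d\nu$ requires absolute continuity (if $\nu(B\cap\{c=\infty\})=0$), and $\tau(B\cap\{c=0\})=0$ requires $\sigma$-finiteness to pass from $\tau\le t\,\nu$ for all $t>0$ to $\tau=0$ when $\nu$ is infinite there --- so $\sigma$-finiteness does not enter ``precisely'' in the uniqueness step as you claim. Second, your maximal disjoint family invokes Zorn's Lemma on top of $\sigma$-principality; this is harmless and consistent with the original CCC proof, but note that the paper's stated motivation for isolating $\sigma$-principality is precisely to quarantine the choice-theoretic step, so one would ideally rerun that exhaustion through another application of $\sigma$-principality. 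Third, in the forward inequality the bound is $\sup_k t_{k+1}\,\nu(B\cap\{c\ge t_k\})\le(1+\epsilon)\dint{B}c\,d\nu$ for the mesh $t_k=(1+\epsilon)^k$, $k\in\mathbb{Z}$; ``tends to as the mesh shrinks'' should be replaced by this explicit estimate, using that $\sup_t t\,\nu(\{c>t\})=\sup_t t\,\nu(\{c\ge t\})$. With these repairs the argument is complete and correct.
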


Here $\dint{B} c\, d\nu := \sup_{t \in \reels_+} t.\nu(B \cap \{ c > t \})$ denotes the Shilkret integral, see Shilkret \cite{Shilkret71}; see also Poncet  \cite[Chapter~I]{Poncet11}. 
The superscript $\infty$ in the notation of the Shilkret integral finds its justification in Gerritse \cite{Gerritse96}, who stated that the Shilkret integral can be viewed as a limit of Choquet integrals. 

\bibliographystyle{plain}

	

\end{document}